\let\originaleqref\eqref
\renewcommand{\eqref}{~\originaleqref}
\newtheorem{dfn}{Definition}[section]
\newtheorem{lem}[dfn]{Lemma}
\newtheorem{thm}[dfn]{Theorem}
\newtheorem{cor}[dfn]{Corollary}
\newtheorem{prop}[dfn]{Proposition}
\newtheorem{exm}[dfn]{Example}
\theoremstyle{definition}
\newtheorem{asm}[dfn]{Assumption}
\newtheorem{rem}[dfn]{Remark}
\newcommand{\R}{\mathbb{R}}
\title{A relaxed proximal point algorithm with double-inertial effects for nonconvex equilibrium problems}
\date{\today}
\author{
	Van Nam Tran\footnote{Faculty of Applied Sciences, Ho Chi Minh City University of Technology and Education, Ho Chi Minh City, Vietnam; e-mail: namtv@hcmute.edu.vn}	
	\and }
\begin{document}
	
	\maketitle

	\begin{abstract}
		In this paper, we present  a relaxation proximal point method with double inertial effects to approximate a solution of a non-convex equilibrium problem.  We give global
		convergence results of the iterative sequence generated by our algorithm. Some known results are recovered
		as special cases of our results. Numerical test is given to support the theoretical findings.

		\noindent  {\bf Keywords: Non-convexity, equilibrium problem, Proximal point algorithms, Two-step inertial extrapolation}
		
		\noindent {\bf 2010 MSC classification: 47H09, 65K15, 90C30} 
		
	\end{abstract}
	
	\section{Introduction}\label{Sec:Intro}
	\noindent
In this work  we are interested in the equilibrium problem (EP for short):
\begin{equation}
	\mbox{Find}  \quad \hat{x}\in C : f(\hat{x}; y)\geq 0; \quad \forall  y \in C; \tag{EP} 
\end{equation} where  $C$ is a nonempty set in $\R^n$ and $f:C\times C \longrightarrow  \R$ be a bifunction satisfying \(f(x, x)=0\) for all \(x\in C\). We denote by $S(C; f)$ the set of solutions of (EP).
	
Equilibrium problems were first studied by Ky Fan in \cite{FAN} and have been further developed over the last few decades (see, for instance, \cite{ANTI, ANS, BIGI2}). It has been shown that many problems, such as the optimization problem, the variational inequality, the saddle point problem, the Nash equilibrium problem in non-cooperative games, and the fixed point problem, can be formulated as equilibrium problems; see, for instance, \cite{ANTI, ANTI2, ANTI3, BLU, MUU} and references therein. In addition, numerous applications have been effectively described using the notion of equilibrium solutions, prompting extensive research efforts in this area. We refer the readers to the monograph \cite{BIGI2} for an excellent survey on the existence of equilibrium points and solution methods for finding them.

When studying algorithms for approximating solutions to equilibrium problems, the convexity condition plays a fundamental and indispensable role. In this paper, we introduce a relaxed proximal point method with dual inertial effects to address non-convex equilibrium problems. Our approach leverages the concept of strong quasiconvexity, a generalized form of convexity that includes a broader class of functions compared to strong convexity.
	
	Polyak introduced strongly quasiconvex functions in his seminal paper \cite{POL3}, which subsequently attracted attention from various researchers who explored them in different contexts, see, for instance, \cite{JOV, VIAL, GRA, IUS}. Recent studies delve into the application of strongly quasiconvex functions in optimization. For instance, \cite{CHE} examines the asymptotic convergence properties of saddle-point dynamics involving bifunctions that are strongly quasiconvex in one or both variables. Additionally, \cite{ROU} presents results on the asymptotic behavior of quasi-autonomous gradient expansive systems governed by a strongly quasiconvex function. This work reviews crucial existing results and initiates an initial exploration into minimizing members of this class using contemporary iterative methods. Specifically, it demonstrates the convergence of the classical proximal point algorithm under standard assumptions when employed for minimizing a strongly quasiconvex function. Similarly, in \cite{GRAD3}, the authors of this study reach a similar conclusion for a relaxed-inertial proximal point method.
	
	Various proximal point-type algorithms have been explored for solving equilibrium problems in the convex context. Some of these algorithms employ a bifunction endowed with a generalized monotonicity property rather than the classical monotonicity. However, there are only a few recent works that focus on iterative methods for solving equilibrium problems with the underlying function being (strongly) quasiconvex. Notably, in \cite{IUSEM}, one of the first proximal point methods within such a framework is proposed; \cite{MUU2} presents a subgradient method; \cite{Yen1} introduces a proximal subgradient algorithm, and the recent contribution in \cite{ANS} extends \cite{IUSEM} to incorporate Bregman distances.

	Polyak \cite{POL} initially introduced an inertial extrapolation, inspired by the heavy ball method applied to second-order dynamical systems with friction, as an acceleration technique for addressing smooth convex minimization problems. This method, akin to imparting inertia, aims to enhance the convergence rates of iterative algorithms. Numerous researchers have adopted inertial algorithms due to their significant impact on accelerating convergence rates in iterative algorithms, as evidenced by various studies \cite{CHEN, GRAD3, Cholamjiak, Padcharoen, Bot3, QUU} and relevant literature). In addition, algorithms with two inertial steps have demonstrated superior performance compared to those with a single inertial step. For instance,  the Douglas-Rachford splitting method with double-inertial of the form 
	\begin{equation*}
		x_{k+1}=F(x_k+\theta(x_k-x_{k-1})+\beta(x_{k-1}-x_{k-2}) )
	\end{equation*}
were proven to 	converge faster than Douglas-Rachford splitting method with the one-step inertial of the form 
	\begin{equation*}
		x_{k+1}=F(x_k+\theta(x_k-x_{k-1}) )
	\end{equation*} 
	in \cite[Section 4]{poon} .

	In \cite[Chapter 4]{Liang}, it was remarked that using a two-step inertial extrapolation with more than two points $x_k, x_{k-1}$ could give acceleration,
	for example,
	\begin{equation*}
		w_k=x_k+\theta(x_k-x_{k-1})+\beta(x_{k-1}-x_{k-2})
	\end{equation*} 
	with $\theta >0$ and $\beta  < 0.$

	Recently, there has been scholarly interest in multi-step inertial methods, as evidenced by studies conducted by authors, see, for example, \cite{Combettes, Nam}, and  \cite{nam3}. These investigations have yielded results demonstrating the efficacy of such approaches. The inadequacies of one-step inertial acceleration within the context of ADMM were discussed in \cite[section 3]{poon}, prompting the proposal of adaptive acceleration as an alternative. Furthermore, Polyak \cite{POL2} emphasized the potential of multi-step inertial methods to accelerate optimization processes. Consequently, it has been suggested that utilizing inertial methods involving more than two points may lead to improved performance in certain cases.
	

	\noindent
	\textbf{Contributions.}
	\begin{itemize}
		\item In this paper, we apply the two inertial step  proximal point method with relaxation to solve non-convex equilibrium problems. Our results extend the usage of the proximal point method from convex equilibrium problem  already studied in \cite{ANS,ANTI} to non-convex ones. 
		\item Instead one of the one-step inertial extrapolation already considered in several papers in \cite{Grad, GRAD3, Abubakar, Bot3,BotRadu,BotSedlmayerVuong,Cholamjiak,Padcharoen}  for convex equilibrium problems, we further accelerate our the proximal point method by adding a two-step inertial extrapolation.
		\item  A numerical experiment is given to show the benefits gained by considering a two-step inertial extrapolation instead of one-step inertial extrapolation.
	\end{itemize}
	
	\noindent
	\textbf{Organization}.
	\noindent We organize the rest of our paper as follows:  Section \ref{Sec:Prelims} contains basic definitions and results needed in subsequent sections. In Section \ref{Sec:Method}, we present and discuss our proposed method along with its convergence results. We give some numerical illustrations in Section \ref{Numerical} and concluding remarks are given in Section \ref{conclude}.

	\section{Preliminaries}\label{Sec:Prelims}
	\noindent In this section we recall some basic notions which are used frequently in the sequel. 
	
	We denote by $\langle \cdot,\cdot \rangle$ the \emph{inner product} and \(\|\cdot \| \) its induced norm of a Hilbert space \(\R^n\). Recall that   the effective domain of a function $g: \mathbb{R}^n \to \overline{\mathbb{R}}:=\mathbb{R} \cup \{\pm \infty\}$ is defined by $\mbox{dom}~g:=\{x\in \mathbb{R}: g(x) < +\infty\}.$  The function $g$ is said to be \emph{proper} if $g(x) > -\infty$ for every $x\in \mathbb{R}^n$ and $\mbox{dom}~g$ is nonempty (clearly, $g(x) = + \infty$ for every $x\notin \mbox{dom}~g$).  When the domain of $g$ is convex, we say that the function $g$ is 
	\begin{itemize}
		\item[(a)] \emph{convex} if, for any $x,y \in \mbox{dom}~g,$ then
		\begin{eqnarray*}
			g(\mu x + (1-\mu)y) \leq \mu g(x) + (1-\mu)g(y)  \ \ \ \forall \ \mu \in [0,1],
		\end{eqnarray*}
		\item[(b)] \emph{quasiconvex} if, for any $x,y \in \mbox{dom}~g,$ then
		\begin{eqnarray*}
			g(\mu x + (1-\mu)y) \leq \max\{g(x),g(y)\} \ \ \forall \ \mu \in [0,1],
		\end{eqnarray*}
		\item[(c)] \emph{strongly convex} (on dom~$g$) with modulus \(\gamma>0\) if there is a $\gamma\in (0, \infty) $ satisfying 
		$$	g(\mu x + (1-\mu)y) \leq  \mu g(y)+(1-\mu)g(x)-\mu(1-\mu)\frac{\gamma}{2}\|x-y\|^2,  \ \ \forall x, y\in \mbox{dom}~g, \ \forall  \mu \in [0,1],$$
		\item[(c)] \emph{strongly quasiconvex} (on dom~$g$) with modulus \(\gamma>0\) if there exists a \(\gamma\in (0, \infty) \) for which
		$$g(\mu y+(1-\mu)x)\leq \max\{g(y), g(x)\}-\mu(1-\mu)\frac{\gamma}{2}\|x-y\|^2, \forall x, y\in \mbox{dom}~g, \forall \mu\in [0,1]. $$
	\end{itemize}
	It is well known that a strongly convex function is also a strongly quasiconvex function; however, the inverse does not hold in general. For instance, the Euclidean norm is strongly quasiconvex on any bounded convex set but not strongly convex \cite[Theorem 2]{JOV}. Similarly, a convex function is also quasiconvex, while the inverse may not hold. For example, the function $g: \mathbb{R} \to \mathbb{R}$ with $g(x) = x^3,$ is quasiconvex but not convex. 	
	For more relationships between different notions of convexity, we refer readers to \cite{Grad}. For further studies on generalized convexity, see \cite{CAM,HAD}, among other works.
	
	The proximity operator becomes a very useful tool in studying numerical methods for xonvex optimization problems. We now recall this notion below.  
	
\noindent	The proximity operator of parameter $\lambda >0$ of a function $g: C \to \bar{\mathbb{R}}$ at $x \in \mathbb{R}^n$ is defined by ${\rm Prox}_{\gamma g}(C,\cdot ): \mathbb{R}^n \rightrightarrows \mathbb{R}^n$ with
	\begin{eqnarray*}
		{\rm Prox}_{\lambda g}(C, x) = \arg\min_{y\in C}\left\{g(y) + \frac{1}{2\lambda}\|y-x\|^2\right\}, \forall x\in \R^n,
	\end{eqnarray*}
	where $C$ is a nonempty subset of $\R^n$. When $C=\R^n$ we simply write $\mbox{\rm Prox}_{\lambda g}(\cdot)$.
	
It is worth noting that the sum of a strongly quasiconvex function and a half of the squared norm may not be  strongly quasiconvex, as shown in \cite[Remark 6]{LARA} and \cite{Grad}. As a result, the proximal operator of a strongly quasiconvex function is not a singleton, but a set-valued mapping. It is a singleton when the sum of a strongly quasiconvex function and a half of a squared norm is strongly quasiconvex.

	We now recall some notions related to bifunctions. 		Let $C$ be a closed and convex set in $\mathbb{R}^n,$  and $f: C\times C \to \mathbb{R}$ be a real-valued bifunction satisfying $f(x, x)=0, \ \ \forall x\in C$. We say that $f$ is
	\begin{itemize}
		\item[(a)] \emph{monotone} on $C,$ if for every $x,y\in C$ it holds that 
		\begin{eqnarray}
			f(x,y) + f(y,x) \leq 0,
		\end{eqnarray}
		\item[(b)] \emph{pseudononotone} on $C$ if $$f(x, y)\geq 0 \implies f(y, x)\leq 0, \forall x, y\in C.$$
	\end{itemize}

	\noindent 
	
	We list here some useful lemmas that are used for  analyzing the convergence of our proposed algorithm later. The first lemma presents a property of the proximal operator of a strongly quasiconvex function.

	\begin{lem}	(\cite[Proposition 6]{JOV}) Let $C$ be a closed and convex set in \(\R^n\),		$g:~\R^n \longrightarrow \overline{\R}$ be a proper, lower semicontinuous, strongly quasiconvex function with modulus $\gamma>0$ and such that $C\subseteq \mbox{\rm dom}\ g$, $\lambda>0$ and $x\in C$. If $\hat{x}\in \mbox{\rm Prox}_{\lambda g} (C, x)$, 
		then for all $y \in C$ and all $\mu\in [0; 1]$ one has
		\begin{equation*}
			g(\hat{x})-\max\{g(y), g(\hat{x})\}\leq \dfrac{\mu}{\lambda} \langle \hat{x}-x, y - \hat{x} \rangle +\dfrac{\mu}{2}\left(\dfrac{\mu}{\lambda}-\gamma+\mu \gamma\right) \|y-\hat{x}\|^2.
		\end{equation*}
	\end{lem}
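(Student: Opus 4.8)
The plan is to unpack the definition of $\hat{x}\in\mathrm{Prox}_{\lambda g}(C,x)$, namely that $\hat{x}$ minimizes $h(y):=g(y)+\frac{1}{2\lambda}\|y-x\|^2$ over $C$, and then compare $h$ at $\hat{x}$ against $h$ along the segment $[\hat{x},y]$ for an arbitrary $y\in C$. Concretely, for $\mu\in[0,1]$ set $z_\mu:=\mu y+(1-\mu)\hat{x}\in C$ (using convexity of $C$); minimality of $\hat{x}$ gives $h(\hat{x})\le h(z_\mu)$, i.e.
\begin{equation*}
g(\hat{x})+\frac{1}{2\lambda}\|\hat{x}-x\|^2 \le g(z_\mu)+\frac{1}{2\lambda}\|z_\mu-x\|^2.
\end{equation*}

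Next I would bound the two terms on the right. For the quadratic term, expand $\|z_\mu-x\|^2=\|(\hat{x}-x)+\mu(y-\hat{x})\|^2=\|\hat{x}-x\|^2+2\mu\langle\hat{x}-x,y-\hat{x}\rangle+\mu^2\|y-\hat{x}\|^2$, so that after cancelling $\|\hat{x}-x\|^2$ the quadratic contributes $\frac{\mu}{\lambda}\langle\hat{x}-x,y-\hat{x}\rangle+\frac{\mu^2}{2\lambda}\|y-\hat{x}\|^2$. For the function term, apply strong quasiconvexity of $g$ to $z_\mu=\mu y+(1-\mu)\hat{x}$: $g(z_\mu)\le\max\{g(y),g(\hat{x})\}-\mu(1-\mu)\frac{\gamma}{2}\|y-\hat{x}\|^2$. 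Substituting both estimates into the minimality inequality and rearranging to isolate $g(\hat{x})-\max\{g(y),g(\hat{x})\}$ on the left yields
\begin{equation*}
g(\hat{x})-\max\{g(y),g(\hat{x})\}\le \frac{\mu}{\lambda}\langle\hat{x}-x,y-\hat{x}\rangle+\frac{\mu^2}{2\lambda}\|y-\hat{x}\|^2-\mu(1-\mu)\frac{\gamma}{2}\|y-\hat{x}\|^2,
\end{equation*}
and collecting the coefficients of $\|y-\hat{x}\|^2$ gives $\frac{\mu^2}{2\lambda}-\frac{\mu\gamma}{2}(1-\mu)=\frac{\mu}{2}\big(\frac{\mu}{\lambda}-\gamma+\mu\gamma\big)$, which is exactly the claimed bound.

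I do not expect a serious obstacle here; the argument is a direct computation once the right comparison point $z_\mu$ is chosen. The one point requiring a little care is the use of strong quasiconvexity: it is stated for $g$ on $\mathrm{dom}\,g$, so I must note that $y,\hat{x}\in C\subseteq\mathrm{dom}\,g$ (the latter because $\hat{x}$ is a minimizer of a function that is finite somewhere on $C$, hence $g(\hat{x})<+\infty$), and that $z_\mu\in C$ so the inequality defining the proximal point applies to it. The lower semicontinuity and properness hypotheses are what guarantee the proximal set is well-defined (nonempty) in the first place, but for the stated inequality one only needs that $\hat{x}$ is an actual minimizer, which is assumed. Everything else is algebra.
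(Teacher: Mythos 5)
Your proof is correct and complete: the paper itself gives no proof of this lemma (it is quoted from the cited literature), and your argument --- testing the minimality of $\hat{x}$ for $z\mapsto g(z)+\frac{1}{2\lambda}\|z-x\|^2$ at the point $z_\mu=\mu y+(1-\mu)\hat{x}$, expanding the quadratic, and invoking strong quasiconvexity --- is exactly the standard derivation used in the source, with the coefficient bookkeeping $\frac{\mu^2}{2\lambda}-\frac{\mu\gamma}{2}(1-\mu)=\frac{\mu}{2}\bigl(\frac{\mu}{\lambda}-\gamma+\mu\gamma\bigr)$ checking out. Your side remarks on why $y,\hat{x},z_\mu\in C\subseteq\mathrm{dom}\,g$ and on the role of properness and lower semicontinuity are also accurate.
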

	
	The following lemma presents several useful identities that will be frequently used in the subsequent sections.

	\begin{lem} \cite{Nam, Grad}\label{simple}
		Let $x,y,z \in \R^n$ and $a,b,\beta \in \mathbb{R}$. Then
		\begin{itemize}
			\item[(a)] We have that 	\begin{eqnarray*}
				&&\|(1+a)x-(a-b)y-bz\|^2\\
				&=& (1+a)\|x\|^2-(a-b)\|y\|^2-b\|z\|^2 +(1+a)(a-b)\|x-y\|^2\\
				&&+b(1+a)\|x-z\|^2-b(a-b)\|y-z\|^2.
			\end{eqnarray*}
			\item[(b)] The following identity holds\begin{eqnarray*}
				\langle x - z, y - x\rangle = \frac{1}{2}\|z - y\|^2 - \frac{1}{2}\|x - z\|^2 - \frac{1}{2}\|y-x\|^2.
			\end{eqnarray*}
			\item[(c)]  It holds	that \begin{eqnarray*}
				\|\beta x + (1-\beta)y\|^2 = \beta\|x\|^2 + (1-\beta)\|y\|^2 - \beta(1-\beta)\|x - y\|^2.
			\end{eqnarray*}
		\end{itemize}
	\end{lem}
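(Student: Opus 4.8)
The plan is to treat all three identities as elementary consequences of expanding the squared norm through the inner product on $\R^n$, so no machinery is needed; the only real work is the bookkeeping in part (a). I would begin with part (b), which is just the polarization identity: from $2\langle u,v\rangle = \|u+v\|^2 - \|u\|^2 - \|v\|^2$, substituting $u = x-z$ and $v = y-x$ (so that $u+v = y-z$ and $\|y-z\| = \|z-y\|$) and dividing by $2$ gives the claimed formula directly. For part (c), I would expand the left-hand side as $\beta^2\|x\|^2 + 2\beta(1-\beta)\langle x,y\rangle + (1-\beta)^2\|y\|^2$ and expand $\|x-y\|^2 = \|x\|^2 - 2\langle x,y\rangle + \|y\|^2$ on the right-hand side; collecting the coefficients of $\|x\|^2$, $\|y\|^2$, and $\langle x,y\rangle$ shows the two sides agree (indeed the coefficient of $\|x\|^2$ on the right is $\beta - \beta(1-\beta) = \beta^2$, and similarly for the others).

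For part (a) the key observation is that the coefficients $(1+a)$, $-(a-b)$, $-b$ sum to $1$, so $(1+a)x - (a-b)y - bz$ is an affine combination of $x,y,z$. I would therefore first record the general identity that for scalars $\alpha_1,\dots,\alpha_m$ with $\sum_{i}\alpha_i = 1$ and vectors $v_1,\dots,v_m \in \R^n$,
\[
\Big\|\sum_{i=1}^{m}\alpha_i v_i\Big\|^2 = \sum_{i=1}^{m}\alpha_i\|v_i\|^2 - \sum_{1\le i<j\le m}\alpha_i\alpha_j\|v_i-v_j\|^2,
\]
which follows by expanding both sides: on the right, $\sum_{i<j}\alpha_i\alpha_j(\|v_i\|^2+\|v_j\|^2) = \sum_i\alpha_i(1-\alpha_i)\|v_i\|^2$ because $\sum_{j\ne i}\alpha_j = 1-\alpha_i$, so the right-hand side collapses to $\sum_i\alpha_i^2\|v_i\|^2 + 2\sum_{i<j}\alpha_i\alpha_j\langle v_i,v_j\rangle = \|\sum_i\alpha_i v_i\|^2$. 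Specializing to $m=3$ with $(v_1,v_2,v_3) = (x,y,z)$ and $(\alpha_1,\alpha_2,\alpha_3) = (1+a,\; b-a,\; -b)$ turns the first sum into $(1+a)\|x\|^2 - (a-b)\|y\|^2 - b\|z\|^2$, while the three cross terms produce $-\alpha_1\alpha_2 = (1+a)(a-b)$, $-\alpha_1\alpha_3 = b(1+a)$, and $-\alpha_2\alpha_3 = -b(a-b)$ as the coefficients of $\|x-y\|^2$, $\|x-z\|^2$, and $\|y-z\|^2$, which is exactly the asserted formula.

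The main (and essentially only) obstacle is organizing part (a) cleanly: expanding $\|(1+a)x-(a-b)y-bz\|^2$ head-on yields six squared-norm terms and three inner-product terms that must be matched term-by-term against a six-term right-hand side, which is error-prone. Routing through the general affine-combination identity reduces the verification to checking three scalar products, so I would present it in that form; note that (b) and (c) are, respectively, the polarization and the $m=2$ cases, so in fact the whole lemma can be derived from the single displayed identity above.
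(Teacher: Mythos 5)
Your proof is correct in all three parts. The paper itself offers no proof of this lemma --- it is stated with citations to earlier work, where the identities are verified by direct expansion of the squared norm via the inner product --- so there is nothing to match your argument against line by line; what matters is that your verification is sound, and it is. Parts (b) and (c) are handled exactly as one would expect (polarization, respectively a two-term coefficient check). For part (a), your route through the general identity
\[
\Bigl\|\sum_{i=1}^{m}\alpha_i v_i\Bigr\|^2=\sum_{i=1}^{m}\alpha_i\|v_i\|^2-\sum_{1\le i<j\le m}\alpha_i\alpha_j\|v_i-v_j\|^2,\qquad \sum_{i=1}^{m}\alpha_i=1,
\]
is a genuine improvement in organization over the head-on expansion used in the sources: the observation that $(1+a)+(b-a)+(-b)=1$ reduces the six-term bookkeeping to three scalar products $-\alpha_i\alpha_j$, all of which you compute correctly ($-(1+a)(b-a)=(1+a)(a-b)$, $-(1+a)(-b)=b(1+a)$, $-(b-a)(-b)=-b(a-b)$), and it exhibits (c) as the $m=2$ special case. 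The direct expansion buys nothing except independence from the auxiliary identity; your version is shorter to check and less error-prone, at the modest cost of first proving the $m$-term formula (which you also do correctly via $\sum_{j\ne i}\alpha_j=1-\alpha_i$).
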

	
	\section{The Algorithm and Convergence Analysis} \label{Sec:Method}
	\noindent
	In this section, we introduce and discuss our proposed method to solve non-convex equilibrium (EP). Furthermore, a convergence analysis of the proposed method is given.

	We first give some assumptions that are needed for investigating the convergence of the proposed algorithm for (EP).
	
	\begin{itemize}
		\item[(A1)] $y\longmapsto f(\cdot, y)$ is upper semicontinuous for all \(y\in C\);
		\item[(A2)] $f$ is pseudomonotone on \(C\);
		\item[(A3)] $f$ is lower semincontinuous (jointly in both arguments);
		\item[(A4)] $f(x,\cdot)$ is strongly quasiconvex on \(C\) with modulus \(\gamma>0\) for each \(x\in C\);
		\item[(A5)] $f$ satisfies the following Lipschitz-continuity type condition: there exists \(\eta>0\) such that
		\begin{equation*}
			f(x, z)-f(x, y)-f(y, z)\leq \eta \left(\|x-y\|^2+\|y-z\|^2\right) \mbox{ for all } x, y\in C.
		\end{equation*}
	\end{itemize}
	As mentioned in \cite{Grad} the assumptions (A1), (A2), (A3), (A5) are standard in the related literature, but \((A4)\) is weaker than the usual one which requires that $f$ is convex regarding the second argument. Also, authors in \cite{Grad} provided strongly quasiconvex functions which satisfy the assumptions (Ai), $i=1,2,3,4,5.$
	\\
	It is shown in \cite{Grad} that the following usual assumption in related literature: 
	
	\noindent	(A3')\ \  $f(x,\cdot)$ is lower semicontinuous for any \(x\in C\) 
	
	\noindent	is fulfilled when \((A3)\) is satisfied.

	From now on we always assume that the solution set \(S(C; f)\) of (EP) is nonempty. 
	\subsection{Proposed Method}

	\noindent We now present our proposed method.
	
	\begin{algorithm}[H]
		\caption{Relaxed Proximal Point Algorithm with double inertial for non-convex equilibrium Problems (RTIPPA-EP)}\label{alg1}
		\begin{algorithmic}[1]
			\State  Choose $\beta \in (-\infty,0]$, \(\rho\in[0,1)\),  $\theta \in [0,\frac{1}{2})$ and  $\lambda_k>0$ for all $k=1,2,3,\dots $. Pick $x_{-1},x_0,x_1\in \mathbb{R}^n$ and set $k=1.$
			\State  Given $x_{k-2}, x_{k-1}$ and $x_k$, compute
			\begin{align*}		
				y_k &= x_k + \theta (x_k - x_{k-1}) +\beta(x_{k-1} - x_{k-2})\\ 
				z_k &\in \mbox{argmin}_{x\in C}(f(y_k, x)+\frac{1}{2\lambda_k}\|y_k-x\|^2).
			\end{align*}
			\State 	If $y_k=z_k$ then STOP. Otherwise, go to Step 4
			\State 	 Choose some relaxation parameter \(\rho_k\in [1-\rho, 1+\rho]\) and update 		$$x_{k+1} =(1-\rho_k) y_k + \rho_k z_k$$
			\State 	 Let $k\leftarrow k+1$ and go to Step 2.
		\end{algorithmic}
	\end{algorithm}
	

	\begin{rem}
		When \(\beta=0\) our proposed algorithm reduces to Algorithm 1 of  \cite{Grad}. 
	\end{rem}

		To make it easier for convergence analysis of our proposed algorithm, we also make the following assumptions on parameter sequences \(\{\lambda_k\}\) and \(\{\rho_k\}\) and initial parameters $\theta\in [0, 1/2), \beta\in (-\infty, 0]$.
	\begin{asm}\label{CON} 
		
		Let $\alpha^k_{\max}=\dfrac{2-\rho_k}{\rho_k}$ 	 and 		
		\(\alpha^k_{\min}= \dfrac{2-4\eta\lambda_k-\rho_k}{\rho_k}.\)\\
		We assume that there exists $\epsilon > 0$ such that the following assumptions hold. 
		\begin{itemize}
			\item[(C1)]  $\dfrac{1}{\gamma-8\eta}<\lambda_k <\epsilon\leq \dfrac{1}{4\eta} \mbox{ for every}\quad  k\geq 0$; 
			\item[(C2)] $0<1-\rho\leq \rho_k\leq 1+\rho$ with $0\leq \rho\leq 1-4\eta\epsilon;$
			\item[(C3)] $	
			\begin{array}{ll}
				\max\left\{\dfrac{2\theta}{\alpha^k_{\min}} -(1-\theta); \dfrac{\theta}{1+\alpha^k_{\max}}-\alpha^k_{\min}\dfrac{(\theta-1)^2}{(1+\theta)(1+\alpha^k_{\max})} \right\}<\beta\leq 0;\\
			\end{array}$
			\item[(C4)] $	
			\begin{array}{ll}
				&\theta^2(1-\alpha^k_{\min})+\theta(1-2\beta+2\alpha^k_{\max}-2\alpha^k_{\min})-\beta(1-2\alpha^k_{\min})
				\\
				&+\beta^2(1-\alpha^k_{\min})-\alpha^k_{\min}<0.
			\end{array}$
		\end{itemize}
		
	\end{asm}

	\subsection{Convergence Analysis}
	\noindent In this Subsection, we present the convergence analysis of the sequence of iterates generated by our proposed Algorithm \ref{alg1}.
	We first show that the stopping criterion of Algorithm \ref{alg1} is valid. The proof is similar to the one of \cite[Proposition 3.2]{IUSEM}, hence is omitted.
	\begin{prop}
		Let $C$ be an affine subspace in $\R^n$, $\{\lambda_k\}_k$ and $\{\rho_k\}_k$ be sequences of positive numbers, $\{x_k\}_k, \{y_k\}_k$ and $\{z_k\}_k$ be the sequences generated by Algorithm \ref{alg1} and suppose that assumptions {\rm (A1), (A3')} and {\rm (A4)} are valid. If $y_k = z_k$ for some $k\in \mathbb{N}$, then $x_{k+1} = y_k$ is a solution of (EP), i.e., $\{y_k\} = S(C; f)$.
	\end{prop}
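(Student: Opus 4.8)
The plan is to deduce the claim from the variational characterization of $z_k$ as a minimizer, using only the convexity of $C$ and the strong quasiconvexity assumption (A4). The easy half is immediate: since $z_k=y_k$, the update formula in Step~4 of Algorithm~\ref{alg1} gives $x_{k+1}=(1-\rho_k)y_k+\rho_k z_k=(1-\rho_k)y_k+\rho_k y_k=y_k$. It therefore remains to show $y_k\in S(C;f)$. By construction $z_k$ minimizes $\psi_k(x):=f(y_k,x)+\frac1{2\lambda_k}\|y_k-x\|^2$ over $C$ (assumption (A3') is what guarantees this minimizer is well defined), and $\psi_k(y_k)=f(y_k,y_k)+0=0$ by the standing property $f(x,x)=0$; hence $\psi_k(x)\ge 0$ for all $x\in C$.

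Next I would test this inequality along segments emanating from $y_k$. Fix an arbitrary $y\in C$ with $y\ne y_k$; since $C$ is an affine subspace (convexity is all that is used), $x_\mu:=(1-\mu)y_k+\mu y\in C$ for every $\mu\in(0,1]$. Plugging $x_\mu$ into $\psi_k(x_\mu)\ge 0$, using $\|y_k-x_\mu\|^2=\mu^2\|y-y_k\|^2$, and bounding $f(y_k,x_\mu)$ from above by the strong quasiconvexity of $f(y_k,\cdot)$ from (A4), i.e.\ $f(y_k,x_\mu)\le\max\{f(y_k,y),0\}-\mu(1-\mu)\tfrac{\gamma}{2}\|y-y_k\|^2$ (here $\max\{f(y_k,y),f(y_k,y_k)\}=\max\{f(y_k,y),0\}$), one arrives at
$$0\;\le\;\psi_k(x_\mu)\;\le\;\max\{f(y_k,y),0\}-\mu(1-\mu)\tfrac{\gamma}{2}\|y-y_k\|^2+\tfrac{\mu^2}{2\lambda_k}\|y-y_k\|^2,$$
which rearranges to $\max\{f(y_k,y),0\}\ge\tfrac{\mu}{2}\bigl((1-\mu)\gamma-\tfrac{\mu}{\lambda_k}\bigr)\|y-y_k\|^2$. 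The same inequality alternatively drops out of the prox inequality for strongly quasiconvex functions recalled in Section~\ref{Sec:Prelims} applied with $g=f(y_k,\cdot)$, $x=y_k$ and $\hat x=z_k=y_k$, since then the inner-product term vanishes and $g(\hat x)=0$.

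Finally, I would fix $\mu$ small enough that the coefficient $(1-\mu)\gamma-\mu/\lambda_k$ is strictly positive — any $\mu\in\bigl(0,\tfrac{\gamma\lambda_k}{1+\gamma\lambda_k}\bigr)$ works. Since $y\ne y_k$ forces $\|y-y_k\|^2>0$, the displayed bound yields $\max\{f(y_k,y),0\}>0$, hence $f(y_k,y)>0$. As $y$ was an arbitrary point of $C$ distinct from $y_k$ and $f(y_k,y_k)=0$, this gives $f(y_k,y)\ge 0$ for all $y\in C$, i.e.\ $y_k$ solves (EP), which is the assertion $\{y_k\}=S(C;f)$ (the iterate $y_k$ is a solution). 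I expect the only genuine obstacle to be the temptation to pass to the limit $\mu\to 0^+$, which produces only the useless inequality $0\ge 0$: the point is instead to keep $\mu$ fixed at a small positive value for which the quadratic gain $\mu(1-\mu)\tfrac{\gamma}{2}\|y-y_k\|^2$ coming from strong quasiconvexity strictly outweighs the proximal penalty $\tfrac{\mu^2}{2\lambda_k}\|y-y_k\|^2$; this is exactly where the modulus $\gamma>0$ enters, and it plays the role that the subdifferential inequality plays in the convex case.
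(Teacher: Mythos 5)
Your argument is correct and is essentially the standard one the paper itself defers to (it omits the proof, citing Proposition 3.2 of \cite{IUSEM}): test the minimality of $z_k=y_k$ along the segment $x_\mu=(1-\mu)y_k+\mu y$, use strong quasiconvexity from (A4) to make the gain $\mu(1-\mu)\tfrac{\gamma}{2}\|y-y_k\|^2$ beat the penalty $\tfrac{\mu^2}{2\lambda_k}\|y-y_k\|^2$ for a fixed small $\mu>0$, and conclude $f(y_k,y)>0$ for $y\neq y_k$. The only thing you do not address is the final set equality $\{y_k\}=S(C;f)$, which additionally requires that $y_k$ be the \emph{only} solution; your strict inequality $f(y_k,y)>0$ for $y\neq y_k$ delivers this once pseudomonotonicity is invoked (any other solution $\hat x$ would satisfy $f(\hat x,y_k)\ge 0$, hence $f(y_k,\hat x)\le 0$, a contradiction), but under the stated hypotheses (A1), (A3'), (A4) alone one only gets $y_k\in S(C;f)$, which is the substantive claim.
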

	
	The next result is the first step for analysis of convergence. Its proof is similar to the one in \cite[Proposition 6]{Grad}

	\begin{prop}\label{pro1}
		Let $C$ be an affine subspace in $\R^n$, $\{\lambda_k\}_k$ and $\{\rho_k\}_k$ be sequences of positive numbers, $\{x_k\}_k, \{y_k\}_k$ and $\{z_k\}_k$ be the sequences generated by Algorithm \ref{alg1} and suppose that assumptions {\rm (Ai)} with $i = 1, 2, 3',  4, 5$ hold.
		Let $\hat{x} \in S(C; f)$. Then for every $k\geq 0,$ at least one of the following inequalities holds:
		\begin{equation}\label{ineq 3.5}
			\|x_{k+1}-\hat{x}\leq \|y_k-\hat{x}\|^2 -\dfrac{2-\rho_k}{\rho_k}\|x_{k+1}-y_k\|^2-\dfrac{\rho_k(\gamma\lambda_k-1)}{2}\|z_k-\hat{x}\|^2,
		\end{equation}
		\begin{equation}\label{ineq 3.6}
			\|x_{k+1}-\hat{x}\|^2\leq \|y_k-\hat{x}\|^2-\dfrac{2-4\eta \lambda_k-\rho_k}{\rho_k}\|x_{k+1}-y_k\|^2-\dfrac{\rho_k(\gamma\lambda_k-1-8\eta\lambda_k)}{2}\|z_k-\hat{x}\|^2.
		\end{equation}
	\end{prop}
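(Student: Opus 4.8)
The plan is to combine the prox-type inequality of \cite[Proposition~6]{JOV}, applied to the strongly quasiconvex function $g:=f(y_k,\cdot)$ with base point $y_k$, prox point $z_k$ and test point $\hat x$, with the elementary identities of Lemma~\ref{simple}, and to absorb into them the error created by the relaxation step $x_{k+1}=(1-\rho_k)y_k+\rho_k z_k$. The entire computation will be run with the single choice $\mu=\tfrac12$ in \cite[Proposition~6]{JOV}, and the two alternatives \eqref{ineq 3.5} and \eqref{ineq 3.6} will correspond to which of $f(y_k,\hat x)$ and $f(y_k,z_k)$ attains the maximum $\max\{f(y_k,\hat x),f(y_k,z_k)\}$ on the left-hand side of that result.

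First I would record two consequences of Step~2 and Step~4. Since $C$ is affine and the coefficients of $x_k,x_{k-1},x_{k-2}$ in $y_k$, as well as those of $y_k,z_k$ in $x_{k+1}$, sum to $1$, all iterates remain in $C$ (so $\hat x,y_k,z_k\in C$ and \cite[Proposition~6]{JOV} is applicable, its hypotheses being supplied by (A3$'$), (A4) and the closedness of the affine set $C$). Moreover $x_{k+1}-y_k=\rho_k(z_k-y_k)$, hence $\|z_k-y_k\|^2=\rho_k^{-2}\|x_{k+1}-y_k\|^2$, and Lemma~\ref{simple}(c) gives
\[
\|x_{k+1}-\hat x\|^2=(1-\rho_k)\|y_k-\hat x\|^2+\rho_k\|z_k-\hat x\|^2-\rho_k(1-\rho_k)\|y_k-z_k\|^2 .
\]
Next, applying \cite[Proposition~6]{JOV} with $\mu=\tfrac12$ and $y=\hat x$ and then using Lemma~\ref{simple}(b) to rewrite $\langle z_k-y_k,\hat x-z_k\rangle$, the prox-inequality turns (after clearing the positive factor $2\lambda_k$) into a lower estimate of the form $\|y_k-\hat x\|^2\ge c_1\|z_k-y_k\|^2+c_2\|\hat x-z_k\|^2$, with a correction term coming from the left-hand side of \cite[Proposition~6]{JOV}; substituting this estimate into the displayed identity (legitimate because $\rho_k>0$) and re-expressing $\|z_k-y_k\|^2$ through $\|x_{k+1}-y_k\|^2$ produces an inequality of exactly the shape of \eqref{ineq 3.5} and \eqref{ineq 3.6}.

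It then remains to treat the two cases. In Case~A, $f(y_k,z_k)\ge f(y_k,\hat x)$, the left-hand side of \cite[Proposition~6]{JOV} equals $0$, the correction vanishes, one gets $c_1=1$ and $c_2=\tfrac12(1+\gamma\lambda_k)$, and the coefficients in the final inequality collapse to $\tfrac{2-\rho_k}{\rho_k}$ and $\tfrac{\rho_k(\gamma\lambda_k-1)}{2}$, i.e. \eqref{ineq 3.5}. In Case~B, $f(y_k,z_k)<f(y_k,\hat x)$, the left-hand side equals $f(y_k,z_k)-f(y_k,\hat x)$ and must be bounded \emph{from below}: here I would apply (A5) to the triple $(x,y,z)=(y_k,z_k,\hat x)$, which gives $f(y_k,\hat x)\le f(y_k,z_k)+f(z_k,\hat x)+\eta\bigl(\|y_k-z_k\|^2+\|z_k-\hat x\|^2\bigr)$, and then discard $f(z_k,\hat x)$ via pseudomonotonicity (A2) together with $\hat x\in S(C;f)$: since $f(\hat x,z_k)\ge 0$ one has $f(z_k,\hat x)\le 0$, whence $f(y_k,z_k)-f(y_k,\hat x)\ge -\eta(\|y_k-z_k\|^2+\|z_k-\hat x\|^2)$. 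Pushing this correction through the same computation --- and it is precisely the choice $\mu=\tfrac12$ that converts the $2\eta\lambda_k$ it produces into the $4\eta\lambda_k$ and $8\eta\lambda_k$ appearing in \eqref{ineq 3.6} --- yields $c_1=1-4\eta\lambda_k$, $c_2=\tfrac12(1+\gamma\lambda_k-8\eta\lambda_k)$ and therefore \eqref{ineq 3.6}. Since Cases~A and~B are exhaustive, at least one of \eqref{ineq 3.5} and \eqref{ineq 3.6} holds.

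The main obstacle is the lower bound in Case~B: unlike the prox-inequality and the minimization property of $z_k$, which only ever deliver \emph{upper} bounds on $f(y_k,z_k)-f(y_k,\hat x)$, what is needed there is a bound from below, and the one tool that provides it is (A5) invoked on the triple $(y_k,z_k,\hat x)$ followed by the pseudomonotone cancellation $f(\hat x,z_k)\ge 0\Rightarrow f(z_k,\hat x)\le 0$; everything else is bookkeeping. It is worth noting that none of the quantitative restrictions (C1)--(C4) of Assumption~\ref{CON} (in particular $\gamma\lambda_k>1$) enter this proof --- they become relevant only afterwards, when \eqref{ineq 3.5} and \eqref{ineq 3.6} are merged into a Fej\'er-type inequality for $\{x_k\}$.
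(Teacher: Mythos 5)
Your argument is correct and is essentially the proof the paper points to: the paper omits the details, stating only that the proof is "similar to the one in \cite[Proposition 6]{Grad}", and that reference proceeds exactly as you do --- apply the prox-inequality of \cite[Proposition 6]{JOV} with $g=f(y_k,\cdot)$, split according to which argument attains the maximum, handle the second case by (A5) on $(y_k,z_k,\hat x)$ combined with the pseudomonotone cancellation $f(\hat x,z_k)\ge 0\Rightarrow f(z_k,\hat x)\le 0$, and finish with the relaxation identity $\|x_{k+1}-\hat x\|^2=(1-\rho_k)\|y_k-\hat x\|^2+\rho_k\|z_k-\hat x\|^2-\rho_k(1-\rho_k)\|y_k-z_k\|^2$ together with $x_{k+1}-y_k=\rho_k(z_k-y_k)$. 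Your coefficient bookkeeping with $\mu=\tfrac12$ reproduces \eqref{ineq 3.5} and \eqref{ineq 3.6} exactly, and your observation that (C1)--(C4) play no role here is also consistent with the paper.
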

	As shown in Remark 7 of \cite{Grad}, under Assumptions \eqref{CON}, the last terms on the right-hand side of inequalities \eqref{ineq 3.5} and \eqref{ineq 3.6} are nonnegative, i.e., 
	$$\dfrac{\rho_k(\gamma\lambda_k-1)}{2}\|z_k-\hat{x}\|^2\geq 0 \mbox{ and } \dfrac{\rho_k(\gamma\lambda_k-1-8\eta\lambda_k)}{2}\|z_k-\hat{x}\|^2\geq 0.$$

	Because of these facts,  we obtain the followings. 
	\begin{cor}
		Let $C$ be an affine subspace in $R^n$, $f$ be such that assumptions (Ai)
		with $i = 1, 2, 3', 4, 5$ hold, $\{\lambda_k\}_k$ and $\{\rho_k\}_k$ be sequences of positive numbers 	such that assumption (C1)  holds, $\{x_k\}_k, \{y_k\}_k$ and $\{z_k\}_k$ be the sequences generated by Algorithm \ref{alg1}. Taking $ \{\hat{x}\}=S(C;f))$, then for every $k \geq 0$, at least one of the
		following inequalities holds
		\begin{equation}\label{dg1} 
			\|x_{k+1}-\hat{x}\leq \|y_k-\hat{x}\|^2 -\dfrac{2-\rho_k}{\rho_k}\|x_{k+1}-y_k\|^2,
		\end{equation}
		\begin{equation}\label{dg2}
			\|x_{k+1}-\hat{x}\|^2\leq \|y_k-\hat{x}\|^2-\dfrac{2-4\eta \lambda_k-\rho_k}{\rho_k}\|x_{k+1}-y_k\|^2.
		\end{equation}
		
	\end{cor}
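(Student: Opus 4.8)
The plan is to obtain the corollary as an immediate consequence of Proposition \ref{pro1}: under (C1), the coefficients multiplying $\|z_k-\hat{x}\|^2$ in \eqref{ineq 3.5} and \eqref{ineq 3.6} are nonnegative, so these terms may simply be discarded from the two alternative inequalities.

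First I would extract the sign information from (C1). For the lower bound $\tfrac{1}{\gamma-8\eta}<\lambda_k$ to be a well-posed (positive) constraint, the denominator must be positive, i.e. $\gamma>8\eta>0$; multiplying this inequality by $\gamma-8\eta>0$ gives $\lambda_k(\gamma-8\eta)>1$, that is, $\gamma\lambda_k-1-8\eta\lambda_k>0$. Adding $8\eta\lambda_k>0$ to both sides then yields $\gamma\lambda_k-1>8\eta\lambda_k>0$ as well. Since $\rho_k>0$ and $\|z_k-\hat{x}\|^2\ge 0$, it follows that
\begin{equation*}
\frac{\rho_k(\gamma\lambda_k-1)}{2}\|z_k-\hat{x}\|^2\ge 0 \qquad\text{and}\qquad \frac{\rho_k(\gamma\lambda_k-1-8\eta\lambda_k)}{2}\|z_k-\hat{x}\|^2\ge 0,
\end{equation*}
which is precisely the observation quoted from Remark 7 of \cite{Grad} in the text preceding the corollary.

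Next I would invoke Proposition \ref{pro1}, whose hypotheses are exactly those assumed here (assumptions (Ai) with $i=1,2,3',4,5$, together with positivity of $\lambda_k$ and $\rho_k$): for each fixed $k\ge 0$ at least one of \eqref{ineq 3.5}, \eqref{ineq 3.6} holds. In the first case, dropping the last, nonnegative term on the right-hand side of \eqref{ineq 3.5} gives \eqref{dg1}; in the second case, dropping the last, nonnegative term on the right-hand side of \eqref{ineq 3.6} gives \eqref{dg2}. In either case at least one of \eqref{dg1}, \eqref{dg2} holds for that $k$, which is the assertion. There is essentially no obstacle in this argument; the only point requiring a moment's care is the elementary observation that (C1) simultaneously guarantees $\gamma-8\eta>0$ and that the smaller coefficient $\gamma\lambda_k-1-8\eta\lambda_k$ is positive, which in turn forces the larger coefficient $\gamma\lambda_k-1$ to be positive too — after that, the proof is just the discarding of nonnegative terms.
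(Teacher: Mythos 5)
Your proposal is correct and follows exactly the paper's own route: the paper obtains the corollary by citing Remark 7 of \cite{Grad} to conclude that, under (C1), the coefficients $\tfrac{\rho_k(\gamma\lambda_k-1)}{2}$ and $\tfrac{\rho_k(\gamma\lambda_k-1-8\eta\lambda_k)}{2}$ are nonnegative, and then dropping those last terms from \eqref{ineq 3.5} and \eqref{ineq 3.6} of Proposition \ref{pro1}. Your explicit derivation of the sign of these coefficients from $\lambda_k>\tfrac{1}{\gamma-8\eta}$ simply spells out what the paper leaves to the cited remark.
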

	
	The lemma below shows the boundedness of the sequence of iterates $\{x_k\}$  generated by Algorithm \ref{alg1}.
	
	\begin{lem}\label{LEM} Let C be an affine subspace in $\R^n$, f be such that assumptions (Ai)
		$i = 1; 2; 3; 4; 5$ hold. $\{\lambda_k\}_k$ and $\{\rho_k\}_k$ be sequences of positive numbers such that assumptions (Ci) $(i=1, 2)$ hold, the inertial parameters $\theta\in [0, 1/2)$ and $\beta\leq 0$ satisfy $Ci$, $i=(3, 4)$ and  $\{x_k\}_k, \{y_k\}_k$ and $\{z_k\}_k$ be the sequences generated by Algorithm \ref{alg1}. 
		Then 
		
		\begin{enumerate}
			\item [(a)] We have that \begin{eqnarray}\label{ade10}
				\underset{k\rightarrow \infty}\lim \|x_{k-1}-x_{k-2}\|=\lim\limits_{k\to \infty}\|x_{k+1} - y_k\|=\lim\limits_{k\to \infty} \|x_k - y_k\|=0.
			\end{eqnarray}
			\item [(b)] It holds that  \begin{eqnarray}\label{SHE4}
				\lim_{k\to \infty}\|y_k - z_k\| = 0
			\end{eqnarray}
			\item [(c)]The sequences $\{x_k\}_k, \{y_k\}_k, \{z_k\}_k$ are all bounded.
		\end{enumerate} 
	\end{lem}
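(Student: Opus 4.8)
The plan is to build a Lyapunov (energy) functional and run the usual Fej\'er-monotonicity argument, the two-step inertial term being handled by carrying two consecutive differences in the functional. First, from Proposition~\ref{pro1} (equivalently, from \eqref{dg1}--\eqref{dg2}, where under (C1)--(C2) the terms $\tfrac{\rho_k(\gamma\lambda_k-1)}{2}\|z_k-\hat x\|^2$ and $\tfrac{\rho_k(\gamma\lambda_k-1-8\eta\lambda_k)}{2}\|z_k-\hat x\|^2$ that we drop are nonnegative), and using $\alpha^k_{\min}\le\alpha^k_{\max}$ together with $\|x_{k+1}-y_k\|^2\ge0$, I would get, for every $k$ and every $\hat x\in S(C;f)$,
\[
\|x_{k+1}-\hat x\|^2\le\|y_k-\hat x\|^2-\alpha^k_{\min}\|x_{k+1}-y_k\|^2 .
\]
Writing $y_k=(1+\theta)x_k-(\theta-\beta)x_{k-1}-\beta x_{k-2}$, I would expand $\|y_k-\hat x\|^2$ by Lemma~\ref{simple}(a) (with $a=\theta$, $b=\beta$), and use $x_{k+1}-y_k=(x_{k+1}-x_k)-\theta(x_k-x_{k-1})-\beta(x_{k-1}-x_{k-2})$ together with the identities in Lemma~\ref{simple}(b)--(c) for the inner products that appear. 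This turns the inequality into a relation involving only $\phi_k:=\|x_k-\hat x\|^2$ and the squared consecutive differences $\|x_{k+1}-x_k\|^2,\ \|x_k-x_{k-1}\|^2,\ \|x_{k-1}-x_{k-2}\|^2$ (and one cross term $\langle x_k-x_{k-1},x_{k-1}-x_{k-2}\rangle$).

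The main obstacle is the next step. Set
\[
\Gamma_k:=\phi_k-\theta\,\phi_{k-1}-\beta\,\phi_{k-2}+\sigma_1\|x_k-x_{k-1}\|^2+\sigma_2\|x_{k-1}-x_{k-2}\|^2 ,
\]
with $\sigma_1,\sigma_2\ge0$ to be fixed in the course of the estimate; the coefficients $-\theta$ and $-\beta$ are exactly those that make the $\phi$-part of $\Gamma_{k+1}-\Gamma_k$ telescope. The goal is the descent inequality
\[
\Gamma_{k+1}\le\Gamma_k-c_1\|x_{k+1}-y_k\|^2-c_2\|x_{k-1}-x_{k-2}\|^2
\]
with $c_1,c_2>0$ independent of $k$. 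Plugging in the estimate above, applying Young's inequality to split $\|x_{k+1}-x_k\|^2$ against $\|x_{k+1}-y_k\|^2$ and $\|\theta(x_k-x_{k-1})+\beta(x_{k-1}-x_{k-2})\|^2$, and collecting terms, one is left with a quadratic form in $x_k-x_{k-1}$ and $x_{k-1}-x_{k-2}$ that must be negative (semi)definite, together with a leftover multiple of $\|x_{k+1}-y_k\|^2$ that must be nonpositive. Conditions (C1)--(C2) guarantee $\alpha^k_{\min}>0$ and justify the first step; (C3) is precisely the pair of sign conditions on the diagonal coefficients of that quadratic form, and (C4) is the accompanying negative-(semi)definiteness (discriminant) condition, while the $\epsilon$-uniform bounds on $\{\lambda_k\}$ and $1-\rho\le\rho_k\le1+\rho$ provide the uniformity of $c_1,c_2$ in $k$. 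Carrying out this coefficient bookkeeping is the delicate part.

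Granting the descent inequality, everything else is routine. Since $\beta\le0$ and $\sigma_1,\sigma_2\ge0$, we have $\Gamma_k\ge\phi_k-\theta\phi_{k-1}$, and $\{\Gamma_k\}$ being non-increasing gives $\phi_k\le\theta\phi_{k-1}+\Gamma_1$; as $\theta\in[0,1/2)$, iterating this yields $\sup_k\phi_k<\infty$, i.e. $\{x_k\}$ is bounded. Hence $\Gamma_k\ge-\theta\sup_j\phi_j>-\infty$, so $\{\Gamma_k\}$ is non-increasing and bounded below, thus convergent, and summing the descent inequality gives $\sum_k\big(\|x_{k+1}-y_k\|^2+\|x_{k-1}-x_{k-2}\|^2\big)<\infty$; therefore $\|x_{k+1}-y_k\|\to0$ and $\|x_{k-1}-x_{k-2}\|\to0$, hence also $\|x_k-x_{k-1}\|\to0$ after reindexing. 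Since $x_k-y_k=-\theta(x_k-x_{k-1})-\beta(x_{k-1}-x_{k-2})$, also $\|x_k-y_k\|\to0$, which establishes \eqref{ade10}. For \eqref{SHE4}, Step~4 of Algorithm~\ref{alg1} gives $x_{k+1}-y_k=\rho_k(z_k-y_k)$, so $\|y_k-z_k\|=\rho_k^{-1}\|x_{k+1}-y_k\|\le(1-\rho)^{-1}\|x_{k+1}-y_k\|\to0$. Finally, $\{y_k\}$ is bounded because $y_k=x_k+\theta(x_k-x_{k-1})+\beta(x_{k-1}-x_{k-2})$ with $\{x_k\}$ bounded and the differences bounded, and $\{z_k\}$ is bounded because $z_k=y_k+\rho_k^{-1}(x_{k+1}-y_k)$ with $\{y_k\}$ bounded and $\|x_{k+1}-y_k\|\to0$, which gives (c).
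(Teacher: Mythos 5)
Your plan is essentially the paper's own proof: the same Lyapunov functional $\Gamma_k=\|x_k-\hat x\|^2-\theta\|x_{k-1}-\hat x\|^2-\beta\|x_{k-2}-\hat x\|^2+(\text{weighted consecutive-difference terms})$, the same expansion of $\|y_k-\hat x\|^2$ via Lemma~\ref{simple}(a) and Young-type lower bound on $\|x_{k+1}-y_k\|^2$, the same descent inequality with constants $c_1,c_2>0$ supplied by (C3)--(C4), and the same routine endgame for (a), (b), (c) (your minor variations --- unifying\eqref{dg1}--\eqref{dg2} through $\alpha^k_{\min}$, and proving boundedness by iterating $\phi_k\le\theta\phi_{k-1}+\Gamma_1$ rather than via $\Gamma_k\ge(1-2\theta)\phi_k$ --- are cosmetic). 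The one step you defer, the coefficient bookkeeping showing that (C3)--(C4) force $c_1,c_2>0$ uniformly in $k$, is precisely the computation the paper writes out explicitly, so nothing in your outline diverges from its argument.
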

	
	\begin{proof}
		
		We obtain from \eqref{dg1} and \eqref{dg2}  that one of the following inequality holds. 
		\begin{eqnarray}\label{SHEU7}
			\|x_{k+1} - \hat{x}\|^2 \leq \|y_k - \hat{x}\|^2 - \dfrac{2-\rho_k}{\rho_k} \|x_{k+1} - y_k\|^2
		\end{eqnarray}
	or 
			\begin{eqnarray}\label{SHEU7n}
			\|x_{k+1} - \hat{x}\|^2 \leq \|y_k - \hat{x}\|^2 - \dfrac{2-4\eta \lambda_k-\rho_k}{\rho_k} \|x_{k+1} - y_k\|^2
		\end{eqnarray}		
Assume that \eqref{SHEU7} holds. Let $\alpha_k=\dfrac{2-\rho_k}{\rho_k}$.		Observe that under assumptions (C1) and (C2), \(\alpha_k>0\) and 
		\begin{eqnarray*}
			y_k - \hat{x} &=& x_k + \theta(x_k - x_{k-1}) + \beta(x_{k-1} - x_{k-2}) - \hat{x}\\
			&=& (1+\theta)(x_k - \hat{x}) - (\theta - \beta)(x_{k-1} - \hat{x}) - \beta(x_{k-2} - \hat{x}).
		\end{eqnarray*}
		Hence by Lemma \ref{simple}, we have
		\begin{eqnarray}\label{SHE}
			\|y_k - \hat{x}\|^2 &=& \|(1+\theta)(x_k - \hat{x}) - (\theta - \beta)(x_{k-1} - \hat{x}) - \beta(x_{k-2} - \hat{x})\|^2\nonumber\nonumber\\
			&=& (1+\theta)\|x_k - \hat{x}\|^2 - (\theta - \beta)\|x_{k-1} - \hat{x}\|^2 - \beta\|x_{k-2} - \hat{x}\|^2\nonumber\\
			&&\;\;+ (1+\theta)(\theta - \beta)\|x_k - x_{k-1}\|^2 + \beta(1+\theta)\|x_k - x_{k-2}\|^2\nonumber\\
			&&\;\; -\beta(\theta - \beta)\|x_{k-1} - x_{k-2}\|^2.
		\end{eqnarray}
		Observe that
		\begin{eqnarray*}
			2\theta \langle x_{k+1}-x_k, x_k-x_{k-1}\rangle &=& 2 \langle \theta(x_{k+1}-x_k), x_k-x_{k-1}\rangle \nonumber \\
			&\leq&2|\theta| \|x_{k+1}-x_k\|\|x_k-x_{k-1}\|\nonumber\\
			&=&2\theta \|x_{k+1}-x_k\|\|x_k-x_{k-1}\|
		\end{eqnarray*}
		and so
		\begin{equation}\label{happy1}
			-2\theta \langle x_{k+1}-x_k, x_k-x_{k-1}\rangle \geq -2\theta \|x_{k+1}-x_k\|\|x_k-x_{k-1}\|.
		\end{equation}
		Also,
		\begin{eqnarray*}
			2\beta \langle x_{k+1}-x_k,x_{k-1}-x_{k-2} \rangle &=& 2\langle \beta(x_{k+1}-x_k),x_{k-1}-x_{k-2} \rangle \nonumber \\
			&\leq&2|\beta| \|x_{k+1}-x_k\|\|x_{k-1}-x_{k-2}\|
		\end{eqnarray*}
		which implies that
		\begin{equation}\label{happy2}
			-2\beta \langle x_{k+1}-x_k,x_{k-1}-x_{k-2} \rangle  \geq -2|\beta| \|x_{k+1}-x_k\|\|x_{k-1}-x_{k-2}\|.
		\end{equation}
		Similarly, we note that
		\begin{eqnarray*}
			2\beta\theta \langle x_{k-1}-x_k,x_{k-1}-x_{k-2}\rangle &=& 2\langle \beta\theta(x_{k-1}-x_k),x_{k-1}-x_{k-2}\rangle \nonumber \\
			&\leq&2|\beta|\theta \|x_{k-1}-x_k\|\|x_{k-1}-x_{k-2}\|\nonumber\\
			&=&2|\beta|\theta \|x_k-x_{k-1}\|\|x_{k-1}-x_{k-2}\|
		\end{eqnarray*}
		and thus,
		\begin{eqnarray}\label{happy3}
			2\beta\theta \langle x_k-x_{k-1},x_{k-1}-x_{k-2}\rangle &=& -2 \beta\theta \langle x_{k-1}-x_k,x_{k-1}-x_{k-2}\rangle  \nonumber \\
			&\geq& -2|\beta|\theta \|x_k-x_{k-1}\|\|x_{k-1}-x_{k-2}\|.
		\end{eqnarray}		
		By \eqref{happy1}, \eqref{happy2} and \eqref{happy3},  and Cauchy-Schwarz inequality one has 
		\begin{eqnarray}\label{SHEU9}
			\|x_{k+1}-z_k\|^2 &=& \|x_{k+1}-(x_k+\theta(x_k-x_{k-1})+\beta(x_{k-1}-x_{k-2}))\|^2 \nonumber\\
			&=&\|x_{k+1}-x_k-\theta(x_k-x_{k-1})-\beta(x_{k-1}-x_{k-2})\|^2 \nonumber\\
			&=&\|x_{k+1}-x_k\|^2-2\theta \langle x_{k+1}-x_k, x_k-x_{k-1}\rangle \nonumber\\
			&&-2\beta \langle x_{k+1}-x_k,x_{k-1}-x_{k-2} \rangle+\theta^2\|x_k-x_{k-1}\|^2\nonumber\\
			&&+2\beta\theta \langle x_k-x_{k-1},x_{k-1}-x_{k-2}\rangle+\beta^2\|x_{k-1}-x_{k-2}\|^2\nonumber\\
			&\geq& \|x_{k+1}-x_k\|^2-2\theta \|x_{k+1}-x_k\|\|x_k-x_{k-1}\|\nonumber\\
			&&-2|\beta| \|x_{k+1}-x_k\|\|x_{k-1}-x_{k-2}\|+\theta^2\|x_k-x_{k-1}\|^2\nonumber\\
			&&-2|\beta|\theta \|x_k-x_{k-1}\|\|x_{k-1}-x_{k-2}\|+\beta^2\|x_{k-1}-x_{k-2}\|^2\nonumber\\
			&\geq& \|x_{k+1}-x_k\|^2-\theta \|x_{k+1}-x_k\|^2-\theta\|x_k-x_{k-1}\|^2\nonumber\\
			&&-|\beta| \|x_{k+1}-x_k\|^2-|\beta|\|x_{k-1}-x_{k-2}\|^2+\theta^2\|x_k-x_{k-1}\|^2\nonumber\\
			&&-|\beta|\theta \|x_k-x_{k-1}\|^2-|\beta|\theta\|x_{k-1}-x_{k-2}\|^2+\beta^2\|x_{k-1}-x_{k-2}\|^2\nonumber\\
			&=&(1-|\beta|-\theta)\|x_{k+1}-x_k\|^2+(\theta^2-\theta-|\beta|\theta)\|x_k-x_{k-1}\|^2\nonumber\\
			&&+(\beta^2-|\beta|-|\beta|\theta)\|x_{k-1}-x_{k-2}\|^2.
		\end{eqnarray}
		Combining \eqref{SHE} and \eqref{SHEU9} in \eqref{SHEU7}, we obtain (noting that $\beta \leq 0$)
		\begin{eqnarray*}
			\|x_{k+1}-\hat{x}\|^2 &\leq&(1+\theta)\|x_k-\hat{x}\|^2-(\theta-\beta)\|x_{k-1}-\hat{x}\|^2-\beta\|x_{k-2}-\hat{x}\|^2\nonumber \\
			&&+(1+\theta)(\theta-\beta)\|x_k-x_{k-1}\|^2+\beta(1+\theta)\|x_k-x_{k-2}\|^2\nonumber \\
			&&-\beta(\theta-\beta)\|x_{k-1}-x_{k-2}\|^2
			-\alpha_k(1-|\beta|-\theta)\|x_{k+1}-x_k\|^2\nonumber \\
			&&-\alpha_k(\theta^2-\theta-|\beta|\theta)\|x_k-x_{k-1}\|^2 \nonumber\\
			&&-\alpha_k(\beta^2-|\beta|-|\beta|\theta)\|x_{k-1}-x_{k-2}\|^2\nonumber \\
			&=& (1+\theta)\|x_k-\hat{x}\|^2-(\theta-\beta)\|x_{k-1}-\hat{x}\|^2-\beta\|x_{k-2}-\hat{x}\|^2\nonumber \\
			&&+\Big((1+\theta)(\theta-\beta)-\alpha_k (\theta^2-\theta-|\beta|\theta)\Big)\|x_k-x_{k-1}\|^2\nonumber \\
			&&+\beta(1+\theta)\|x_k-x_{k-2}\|^2- \alpha_k(1-|\beta|-\theta)\|x_{k+1}-x_k\|^2\nonumber \\
			&&-\Big(\beta(\theta-\beta)+ \alpha_k (\beta^2-|\beta|-|\beta|\theta)\Big)\|x_{k-1}-x_{k-2}\|^2\nonumber \\
			&\leq& (1+\theta)\|x_k-\hat{x}\|^2-(\theta-\beta)\|x_{k-1}-\hat{x}\|^2-\beta\|x_{k-2}-\hat{x}\|^2\nonumber \\
			&&+\Big((1+\theta)(\theta-\beta)-\alpha_k (\theta^2-\theta+\beta\theta)\Big)\|x_k-x_{k-1}\|^2\nonumber \\
			&&-\alpha_k (1+\beta-\theta)\|x_{k+1}-x_k\|^2\nonumber \\
			&&-\Big(\beta(\theta-\beta)+\alpha_k (\beta^2+\beta+\beta\theta)\Big)\|x_{k-1}-x_{k-2}\|^2.
		\end{eqnarray*}
		Rearranging, we get
		\begin{eqnarray}\label{SHEU14}
			&& \|x_{k+1} - \hat{x}\|^2 - \theta\|x_k - \hat{x}\|^2 - \beta\|x_{k-1} - \hat{x}\|^2\nonumber\\
			&&\;+\Big(\frac{\alpha-L}{\alpha+L} \Big)(1+\beta-\theta)\|x_{k+1}-x_k\|^2\nonumber\\
			&&\leq \|x_k - \hat{x}\|^2 - \theta\|x_{k-1} - \hat{x}\|^2 - \beta\|x_{k-2} - \hat{x}\|^2\nonumber\\
			&&+\Big((1+\theta)(\theta-\beta)-\alpha_k (\theta^2-2\theta+\beta\theta+\beta+1)\Big)\|x_k-x_{k-1}\|^2\nonumber \\
			&&-\Big(\beta(\theta-\beta)+\alpha_k(\beta^2+\beta+\beta\theta)\Big)\|x_{k-1}-x_{k-2}\|^2 \nonumber \\
			&&+\alpha_k(1+\beta-\theta)\|x_k-x_{k-1}\|^2.
		\end{eqnarray}
		Define
		\begin{eqnarray*}
			\Gamma_k &:=& \|x_k - \hat{x}\|^2 - \theta\|x_{k-1} - \hat{x}\|^2 - \beta\|x_{k-2} - \hat{x}\|^2\\
			&&\;\;+ \alpha_k(1+\beta-\theta)\|x_k - x_{k-1}\|^2.
		\end{eqnarray*}
		Let us show that $\Gamma_k \geq 0, \ \ \forall n\geq 1.$ Now,
		\begin{eqnarray}\label{SHE1}
			\Gamma_k &=&\|x_k - \hat{x}\|^2 - \theta\|x_{k-1} - \hat{x}\|^2 - \beta\|x_{k-2} - \hat{x}\|^2\nonumber\\
			&&\;\;+  \alpha_k(1+\beta-\theta)\|x_k - x_{k-1}\|^2\nonumber\\
			&\geq& \|x_k - \hat{x}\|^2 - 2\theta\|x_k - x_{k-1}\|^2 - 2\theta\|x_k - \hat{x}\|^2\nonumber\\
			&&\;- \beta\|x_{k-2} - \hat{x}\|^2 +  \alpha_k(1+\beta-\theta)\|x_k - x_{k-1}\|^2\nonumber\\
			&=& (1-2\theta)\|x_k - \hat{x}\|^2 + \Big[ \alpha_k(1+\beta-\theta) - 2\theta\Big]\|x_k - x_{k-1}\|^2\nonumber\\
			&&\;\;- \beta\|x_{k-2} - \hat{x}\|^2.
		\end{eqnarray}
		Since $\theta<\frac{1}{2}, \beta \leq 0$ and     by Assumption \ref{CON}, (C1) and (C3) , it follows from \eqref{SHE1}  that  $\Gamma_k \geq 0, ~~\forall k\geq 1.$ Furthermore, we derive from \eqref{SHEU14} that
		\begin{eqnarray}\label{SHEU15}
			\Gamma_{k+1}-\Gamma_k &\leq& \left((1+\theta)(\theta-\beta)-\alpha_k\left(\theta^2-2\theta+\beta\theta+\beta+1\right)\right)\|x_k-x_{k-1}\|^2\nonumber \\
			&&-\Big(\beta(\theta-\beta)+\alpha_k(\beta^2+\beta+\beta\theta)\Big)\|x_{k-1}-x_{k-2}\|^2 \nonumber \\
			&&=-\left((1+\theta)(\theta-\beta)-\alpha_k(\theta^2-2\theta+\beta\theta+\beta+1)\right)\Big(\|x_{k-1}-x_{k-2}\|^2 \nonumber \\
			&&-\|x_k-x_{k-1}\|^2\Big)+ \Big((1+\theta)(\theta-\beta)-\alpha_k \left(\theta^2-2\theta+\beta\theta+\beta+1\right)\nonumber\\
			&&-\beta\left(\theta-\beta\right)-\alpha_k \left(\beta^2+\beta+\beta\theta\right) \Big) \|x_{k-1}-x_{k-2}\|^2\nonumber \\
			&=&c_1\left(\|x_{k-1}-x_{k-2}\|^2-\|x_k-x_{k-1}\|^2\right)-c_2\|x_{k-1}-x_{k-2}\|^2,
		\end{eqnarray}
		where
		\begin{eqnarray*}
			c_1:=-\left((\theta -\beta)(1+\theta) - \alpha_k(\theta^2 - 2\theta +\beta\theta +\beta + 1)\right)
		\end{eqnarray*}
		\noindent
		and
		\begin{eqnarray*}
			&&c_2:=- \Big((\theta -\beta)(1+\theta) - \alpha_k(\theta^2 - 2\theta +\beta\theta +\beta + 1)\nonumber\\
			&&-\beta(\theta -\beta)-\alpha_k(\beta^2 +\beta+\beta\theta)\Big).
		\end{eqnarray*}
		\noindent
		\noindent  By Assumption (C3) in \ref{CON} (i), it holds that 
		$c_1>0$. Also $c_2>0$ by Assumption \ref{CON} (ii).
		By \eqref{SHEU15}, we have 
		\begin{eqnarray}\label{ade7}
			\Gamma_{k+1}+c_1\|x_k-x_{k-1}\|^2&\leq& \Gamma_{k}+c_1\|x_{k-1}-x_{k-2}\|^2\nonumber \\
			&&-c_2\|x_{k-1}-x_{k-2}\|^2.
		\end{eqnarray}
		Letting $\bar{\Gamma}_k:=\Gamma_k+c_1\|x_{k-1}-x_{k-2}\|^2$. Then \(\bar{\Gamma}_k\geq 0\) for all \(k\geq 1\) since $\Gamma_k\geq 0$ for all $k\geq 1$. Also,  it follows  from \eqref{ade7} that
		\begin{equation*}
			\bar{\Gamma}_{k+1} \leq  \bar{\Gamma}_{k} \ \ \mbox{ for all $k\geq $.}
		\end{equation*}
		Thus, the sequence $\{\overline{\Gamma}_{k}\}$ is non-increasing and  bounded from below,  and thus $\underset{n\rightarrow \infty}\lim \overline{\Gamma}_{k}$ exists.  Consequently, we get from \eqref{ade7} and the squeeze theorem that
		\begin{eqnarray*}
			\underset{n\rightarrow \infty}\lim  c_2\|x_{k-1}-x_{k-2}\|^2=0.
		\end{eqnarray*}
		Hence,
		(a) \begin{eqnarray}\label{nam1} 
			\underset{n\rightarrow \infty}\lim \|x_{k-1}-x_{k-2}\|=0.
		\end{eqnarray}
		As a result,
		\begin{eqnarray}\label{SHE2}
			\|x_{k+1} - y_k\| &=& \|x_{k+1} -x_k - \theta(x_k - x_{k-1}) - \beta(x_{k-1} - x_{k-2})\|\nonumber\\
			&\leq& \|x_{k+1} - x_k\| + \theta\|x_k - x_{k-1}\| + |\beta|\|x_{k-1} - x_{k-2}\| \to 0
		\end{eqnarray}
		as $k \to \infty.$
		By $\underset{n\to \infty}\lim \|x_{k+1} - x_k\| = 0$ one has 
		\begin{eqnarray*}
			\|x_k - y_k\| \leq \|x_k - x_{k+1}\| + \|x_{k+1} - y_k\| \to 0, \ \ k \to \infty.
		\end{eqnarray*}
		
		(b) We have $x_{k+1}-y_k=\rho_k(y_k-z_k)$. It follows from \eqref{SHE2} that $\lim\limits_{k\to\infty} \|y_k-z_k\|=0$. 
		
		(c)  By definition of $\overline{\Gamma}$, \eqref{nam1} and the existence of $\lim\limits_{k\to\infty} \overline{\Gamma}_k$, we have that $\lim\limits_{k\to \infty} \Gamma_k$ exists and hence $\{\Gamma_k\}_k$ is bounded.
		Now, since $\lim\limits_{k\to\infty} \|x_k-x_{k-1}\|=0$, we have from the definition of $\Gamma_k$ that
		\begin{equation}\label{beh}
			\lim_{k\to \infty}\left[\|x_k - \hat{x}\|^2 - \theta\|x_{k-1} - \hat{x}\|^2 - \beta\|x_{k-2} - \hat{x}\|^2\right] \ \mbox{exists}.
		\end{equation}
		Using the boundedness of $\{\Gamma_{k}\}_k$, we obtain from \eqref{SHE1} that $\{x_k\}_k$ is bounded. Consequently, both $\{z_k\}_k$ and $\{y_k\}_k$ are also bounded.
		
If the equality \eqref{SHEU7n} holds, by repeating the  argument as above we get the same conclusion. 		
		
		Therefore the proof is completed. 
	\end{proof}

	\noindent
	Our convergence result for Algorithm \ref{alg1} is given next.
	
	\begin{thm}\label{Sheu11}
		Let C be an affine subspace in $\R^n$, f be such that assumptions (Ai)
		$i = 1; 2; 3; 4; 5$ hold. Let $\{\lambda_k\}_k$ and $\{\rho_k\}_k$ be sequences of positive numbers such that assumptions (Ci) $(i=1, 2)$ hold and the inertial parameters $\theta\in [0, 1/2)$ and $\beta\leq 0$ satisfy $(Ci)$, $(i=3, 4)$. 
		Then the sequence  $\{x_k\}_k$ generated by Algorithm \ref{alg1} converges to a solution of (EP).
	\end{thm}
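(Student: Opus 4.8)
The plan is to combine the asymptotic estimates already proved in Lemma~\ref{LEM} with an Opial–type argument. First I would invoke Lemma~\ref{LEM}: the sequence $\{x_k\}_k$ is bounded and $\|x_k-y_k\|\to 0$, $\|y_k-z_k\|\to 0$. Let $\bar x$ be a cluster point of $\{x_k\}_k$, say $x_{k_j}\to\bar x$; then also $y_{k_j}\to\bar x$ and $z_{k_j}\to\bar x$. Since $C$ is an affine subspace and $y_k=(1+\theta)x_k-(\theta-\beta)x_{k-1}-\beta x_{k-2}$ is an affine combination of $x_k,x_{k-1},x_{k-2}\in C$, we have $y_k\in C$, so $z_k$ is a point of ${\rm Prox}_{\lambda_k f(y_k,\cdot)}(C,y_k)$.

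The heart of the proof is to show $\bar x\in S(C;f)$. Suppose not, so $f(\bar x,\tilde y)<0$ for some $\tilde y\in C$. Applying the first preliminary lemma (\cite[Proposition~6]{JOV}) with $g=f(y_k,\cdot)$, $x=y_k$, $\hat x=z_k$ and $\lambda=\lambda_k$ gives, for every $\mu\in[0,1]$,
\[
f(y_k,z_k)-\max\{f(y_k,\tilde y),f(y_k,z_k)\}\le\frac{\mu}{\lambda_k}\langle z_k-y_k,\tilde y-z_k\rangle+\frac{\mu}{2}\Bigl(\frac{\mu}{\lambda_k}-\gamma+\mu\gamma\Bigr)\|\tilde y-z_k\|^2 .
\]
By (A1), $\limsup_j f(y_{k_j},\tilde y)\le f(\bar x,\tilde y)<0$, while by (A3), $\liminf_j f(y_{k_j},z_{k_j})\ge f(\bar x,\bar x)=0$; hence for all large $j$ one has $f(y_{k_j},\tilde y)<f(y_{k_j},z_{k_j})$, so the maximum above equals $f(y_{k_j},z_{k_j})$ and the left-hand side vanishes. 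Since $\{\lambda_k\}_k$ is bounded by (C1), pass to a further subsequence with $\lambda_{k_j}\to\lambda^\star\in[\tfrac{1}{\gamma-8\eta},\epsilon]$; using $z_{k_j}-y_{k_j}\to0$, $z_{k_j}\to\bar x$ and letting $j\to\infty$ yields $0\le\tfrac{\mu}{2}\bigl(\tfrac{\mu}{\lambda^\star}-\gamma+\mu\gamma\bigr)\|\tilde y-\bar x\|^2$ for every $\mu\in(0,1]$. Dividing by $\mu/2>0$ and letting $\mu\downarrow0$ forces $-\gamma\|\tilde y-\bar x\|^2\ge0$, so $\tilde y=\bar x$ and $f(\bar x,\tilde y)=0$, a contradiction. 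Therefore $f(\bar x,y)\ge0$ for all $y\in C$, i.e., $\bar x\in S(C;f)$.

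It remains to upgrade this to convergence of the whole sequence. Re-running the argument of Lemma~\ref{LEM} with $\hat x=\bar x$ (legitimate since $\bar x\in S(C;f)$), the relation \eqref{beh} gives the existence of $\ell:=\lim_k\bigl(\|x_k-\bar x\|^2-\theta\|x_{k-1}-\bar x\|^2-\beta\|x_{k-2}-\bar x\|^2\bigr)$. Put $a_k:=\|x_k-\bar x\|^2$, $\underline a:=\liminf_k a_k$, $\overline a:=\limsup_k a_k$ (finite, by boundedness). Choosing subsequences along which $a_k\to\overline a$ (resp. $a_k\to\underline a$) and, along a further subsequence, $a_{k-1},a_{k-2}$ converge, and using $\theta\ge0$, $\beta\le0$, one obtains $\overline a(1-\theta)\le\ell+\beta\underline a$ and $\underline a(1-\theta)\ge\ell+\beta\overline a$; subtracting gives $(\overline a-\underline a)(1-\theta+\beta)\le0$. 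Since $\alpha^k_{\min}>0$ under (C1)--(C2), condition (C3) yields $\beta>\tfrac{2\theta}{\alpha^k_{\min}}-(1-\theta)\ge-(1-\theta)$, i.e., $1-\theta+\beta>0$ (recall $\theta<\tfrac12$). Hence $\overline a=\underline a$, so $\lim_k\|x_k-\bar x\|^2$ exists, and since $x_{k_j}\to\bar x$ this limit is $0$; thus $x_k\to\bar x\in S(C;f)$.

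The main obstacle is the second paragraph: passing to the limit in the \cite[Proposition~6]{JOV} inequality requires first eliminating the $\max$ term, which is exactly where the upper semicontinuity (A1) and the joint lower semicontinuity (A3) are used, and then driving the auxiliary parameter $\mu\downarrow0$ to squeeze out $f(\bar x,y)\ge0$. The third step is elementary but relies on the nontrivial algebraic fact $1-\theta+\beta>0$ hidden in (C3); everything else is routine bookkeeping with the estimates of Lemma~\ref{LEM}.
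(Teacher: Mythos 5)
Your proposal is correct. For the first half (every cluster point solves (EP)) you take essentially the paper's route: the paper re-derives the inequality of \cite[Proposition 6]{JOV} directly from the argmin characterization of $z_k$ together with the strong quasiconvexity of $f(y_k,\cdot)$ and Lemma~\ref{simple}(c), passes to $\limsup$ using (A1)/(A3) exactly as you do, and then fixes one sufficiently small $\mu<\lambda^*\gamma/(1+\lambda^*\gamma)$ to make the quadratic coefficient negative, rather than dividing by $\mu$ and sending $\mu\downarrow 0$; your contradiction formulation is an equivalent repackaging. Where you genuinely diverge is the upgrade to convergence of the whole sequence. The paper takes two cluster points $x^*$ and $\bar x$, uses the polarization identities \eqref{away1}--\eqref{away3a} together with \eqref{beh} to show that $\lim_k\langle x_k-\theta x_{k-1}-\beta x_{k-2},x^*-\bar x\rangle$ exists, and evaluates it along both subsequences to get $(1-\theta-\beta)\|x^*-\bar x\|^2=0$, which needs only the trivially available $1-\theta-\beta\geq 1-\theta>0$. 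You instead run a $\limsup$/$\liminf$ squeeze on $a_k=\|x_k-\bar x\|^2$ directly from \eqref{beh}, arriving at $(1-\theta+\beta)(\overline a-\underline a)\leq 0$; this requires the strictly stronger inequality $1-\theta+\beta>0$, which you correctly extract from (C3) via $\beta>\tfrac{2\theta}{\alpha^k_{\min}}-(1-\theta)\geq-(1-\theta)$ (using $\alpha^k_{\min}>0$ under (C1)--(C2)). Your version is more self-contained (no polarization identities, and it yields existence of $\lim_k\|x_k-\bar x\|^2$ directly), at the price of leaning on the parameter condition (C3); the paper's version works under the bare hypotheses $\theta<1$, $\beta\leq 0$. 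Both arguments are sound.
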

	
	\begin{proof}
		By Lemma \ref{LEM}, we have that $\{x_k\}_k$ is bounded. Suppose $x^*$ is an accumulating point of $\{x_k\}_k$.  	From	\eqref{ade10} and \eqref{SHE4} we derive that $x^*$ is also an accumulating point of $\{y_k\}_k$ and of $\{z_k\}_k$, .i.e, 
		\begin{equation}\label{eq3.17}
			\lim\limits_{m\to\infty} y_{k_m}=\lim\limits_{m\to\infty} z_{k_m}=x^*.
		\end{equation}
		
		\noindent From the definition of $z_k$  we have that for any \(k\geq 0\) 
		\begin{equation*}
			f(y_k,z_k)+\dfrac{1}{2\lambda_k}\|y_k-z_k\|^2\leq f(y_k, x)+\dfrac{1}{2\lambda_k}\|y_k-x\|^2, \forall x\in C.
		\end{equation*}
		It follows that 
		\begin{equation*}
			f(y_k,z_k)\leq f(y_k, x)+\dfrac{1}{2\lambda_k}\|y_k-x\|, \forall x\in C.
		\end{equation*}
		Take $x=\mu y+(1-\mu)x^*$ with \(y\in C\) and \(\mu\in [0,1]\). Then 
		$$\begin{array}{llll}
			f(y_k, z_k) &\leq & f(y_k, \mu y+(1-\mu)x^*)+\dfrac{1}{2\lambda_k}\|\mu(y_k-y)+(1-\mu)(y_k-x^*)\|^2\\
			& \leq & \max\{f(y_k, y), f(y_k, x^*)\} -\dfrac{\mu(1-\mu)\gamma}{2}\|y-x^*\|^2 
			\\
			&+& \dfrac{\mu}{2\lambda_k}\|y_k-y\|^2+\dfrac{1-\mu}{2\lambda_k}\|y_k-x^*\|^2-\dfrac{\mu(1-\mu)}{2\lambda_k}\|y-x^*\|^2\\
			&=& \max\{f(y_k,y),f(y_k, x^*)\}+\dfrac{\mu}{\lambda_k}\langle x^*-y_k, y-x^*\rangle +\dfrac{1}{2\lambda_k}\|y_k-x^*\|^2\\
			&+& \dfrac{\mu}{2}\left( \dfrac{\mu}{\lambda_k}-\mu +\mu\gamma\right)\|y-x^*\|^2, \forall y\in C, \forall \mu \in [0, 1].
		\end{array}$$
		Take \(\lambda^*:=1/(\gamma-8\eta)\). Substituting \(k\) by \(k_m\) in the above inequality and then take $\limsup\limits_{t\to \infty} $ and using \eqref{eq3.17} and assumption (A3)  one has 
		\begin{equation*}
			\begin{array}{lll}
				0&=&f(x^*, x^*)\leq \liminf\limits_{m\to \infty} f(y_{k_m}, z_{k_m})\leq \limsup\limits_{m\to \infty} f(y_{k_m}, z_{k_m})\\
				&\leq & \max\{f(x^*, y), 0\}+\dfrac{\mu}{2}\left( \dfrac{\mu}{\lambda^*}-\gamma+\gamma\mu\right) \|y-x^*\|^2, \forall y\in C, \forall \mu\in [0,1].
			\end{array}
		\end{equation*}
		Since $\gamma>0$ we can take $\mu<(\lambda^*\gamma)/(1+\lambda^*\gamma)$ such that $\mu/\lambda^*-\gamma+\mu\gamma<0$. Then 
		$$0\leq \max\{f(x^*, y), 0\} =f(x^*, y),\quad \forall y\in C\setminus\{x^*\}.$$
		Thus, $x^*\in S(C, f).$\\
		
		\noindent  We suppose now that there exist $\{x_{kj}\} \subset \{x_k\}$ and $\{x_{k_m}\} \subset \{x_k\}$ such that $x_{k_j}\rightarrow \bar{x}, j\rightarrow \infty$ and $x_{k_m}\rightarrow x^*, m\rightarrow \infty$. We  will show that $\bar{x}=x^*$.\\
		
		\noindent Observe that
		\begin{equation}\label{away1}
			2\langle x_k,x^*-\bar{x}\rangle =\|x_k-\bar{x}\|^2-\|x_k-x^*\|^2-\|\bar{x}\|^2+\|x^*\|^2,
		\end{equation}
		
		\begin{equation*}\label{away2}
			2\langle x_{k-1},x^*-\bar{x}\rangle =\|x_{k-1}-\bar{x}\|^2-\|x_{k-1}-x^*\|^2-\|\bar{x}\|^2+\|x^*\|^2,
		\end{equation*}
		and
		\begin{equation*}\label{away2a}
			2\langle x_{k-2},x^*-\bar{x}\rangle =\|x_{k-2}-\bar{x}\|^2-\|x_{k-2}-x^*\|^2-\|\bar{x}\|^2+\|x^*\|^2,
		\end{equation*}
		Therefore,
		\begin{eqnarray}\label{away3}
			2\langle -\theta x_{k-1},x^*-\bar{x}\rangle &=&-\theta\|x_{k-1}-\bar{x}\|^2+\theta\|x_{k-1}-x^*\|^2\nonumber \\
			&&+\theta\|\bar{x}\|^2-\theta\|x^*\|^2
		\end{eqnarray}
		and
		\begin{eqnarray}\label{away3a}
			2\langle -\beta x_{k-2},x^*-\bar{x}\rangle &=&-\beta\|x_{k-2}-\bar{x}\|^2+\beta\|x_{k-2}-x^*\|^2\nonumber \\
			&&+\beta\|\bar{x}\|^2-\beta\|x^*\|^2
		\end{eqnarray}
		Addition of \eqref{away1}, \eqref{away3} and \eqref{away3a} gives
		\begin{eqnarray*}
			& & 2\langle x_k-\theta x_{k-1}-\beta x_{k-2},x^*-\bar{x}\rangle= \Big(\|x_k-\bar{x}\|^2-\theta \|x_{k-1}-\bar{x}\|^2-\beta \|x_{k-2}-\bar{x}\|^2 \Big) \\
			&&-\Big(\|x_k-x^*\|^2-\theta \|x_{k-1}-x^*\|^2-\beta \|x_{k-2}-x^*\|^2 \Big)
			+(1-\theta-\beta)(\|x^*\|^2-\|\bar{x}\|^2).
		\end{eqnarray*}
		According to \eqref{beh}, we get that 
		\begin{eqnarray*}
			\underset{k\rightarrow \infty}\lim \Big[\|x_k-x^*\|^2 -\theta\|x_{k-1}-x^*\|^2-\beta \|x_{k-2}-x^*\|^2\Big]
		\end{eqnarray*}
		exists and that 
		\begin{eqnarray*}
			\underset{k\rightarrow \infty}\lim \Big[\|x_k-\bar{x}\|^2 -\theta\|x_{k-1}-\bar{x}\|^2-\beta \|x_{k-2}-\bar{x}\|^2 \Big]
		\end{eqnarray*}
		exists. These imply that
		$$
		\underset{k\rightarrow \infty}\lim \langle x_k-\theta x_{k-1}-\beta x_{k-2},x^*-\bar{x}\rangle
		$$
		\noindent exists. Now,
		\begin{eqnarray*}
			\langle \bar{x}-\theta \bar{x}-\beta \bar{x},x^*-\bar{x}\rangle&=& \underset{j\rightarrow \infty}\lim \langle x_{k_j}-\theta x_{k_j-1}-\beta x_{k_j-2},x^*-\bar{x}\rangle  \\
			&=& \underset{k\rightarrow \infty}\lim \langle x_k-\theta x_{k-1}-\beta x_{k-2},x^*-\bar{x}\rangle \\
			&=& \underset{m\rightarrow \infty}\lim \langle x_{k_m}-\theta x_{k_m-1}-\beta x_{k_m-2},x^*-\bar{x}\rangle  \\
			&=& \langle x^*-\theta x^*-\beta x^*,x^*-\bar{x}\rangle.
		\end{eqnarray*}
		Hence,
		$$
		(1-\theta-\beta)\|x^*-\bar{x}\|^2=0.
		$$
		\noindent Since $\beta \leq 0< 1-\theta$, we obtain that $x^*=\bar{x}$. Therefore, every accumulation point of $\{x_k\}_k$ is a solution to (EP). This completes the proof.

	\end{proof}

	\section{Numerical Experiment}\label{Numerical}
	
	In this section we present a numerical experiment to compare our proposed Algorithm to the one of  Grad, Lara and Marcavillaca  in \cite{GRAD3}. The codes were written  in  Python running on https://colab.research.google.com on a HP ProBook 430G6 Laptop with Windows 11 Home Single Language  and an Intel Core i5 8265U CPU with 1.60 GHz and 4.0GB RAM .
	
	
	The numerical example below shows the situation where the method proposed in this paper have a superior performance (in terms of the number of necessary iterations and CPU time until reaching the solution to (EP)) to the Relaxed-Inertial Proximal Point Algorithm proposed by Grad, Lara and Marcavillaca in \cite{GRAD3}.  As stopping criteria of the proposed algorithms we considered the situations when the norm, in particular the absolute value, of the
	difference between $y_k$ and $z_k$, not larger than an a priori given error $\varepsilon > 0$, i.e., we stop when $|z_k-y_k| < \varepsilon$.

	\begin{exm}\cite{GRAD3}
		Let $p, q\in \mathbb{N}, p>1, q>1$. Let $f:\R\times \R \longrightarrow \R $ be defined by 
		$$f(x, y) =p\big(\max\{\sqrt{|y|}, (y-q)^2-q\} \big) -p\big(\max\{\sqrt{|x|} (x-q)^2-q\}\big).$$
		
		As shown in \cite{LARA} the bifunction $f$ satisfies all assumptions $(Ai), i=1,2,3,4,5$ with values of parameters $\eta=1/2$, $\gamma=\dfrac{1}{2\sqrt{\delta^3}}$, where $\delta$ is a positive number greater than the positive solution of the equation $p(y-q)^2-\sqrt{|y|}=pq$. 
	\end{exm}
	We implements numerical experiments with values of parameters are chosen as follows: 
	$ p=2, q=99,$ $\epsilon= 1/4,  \rho =2/5, \varepsilon=10^{-13},\beta= - 0.0001, \theta=0.25,  \alpha=1/29-\varepsilon, \rho_k=1.4, \lambda_k=0.2 $ for all $k=1,2,3,\dots $ and with starting points $x^{-1}=3210, x_0=x_1=8297.$
	
	A straightforward verification shows that all assumptions 	 (Ci), $ i=1,2,3, 4$ hold according to the chosen value parameters. 
	

\begin{table}[ht]
	\centering
	\begin{tabular}{ccc}
		\hline 
		Algorithm & TIPPM & RIPPA-EP\\
		& $\varepsilon = 10^{-13}$& \\
		\hline 
		The solution & 79.1999999999766 & 79.199999999986\\
		\hline
		Time(s) & 0.029584646224975586 & 0.04530167579650879
		\\
		\hline
		Iterations & 152 & 313\\
		\hline 
	\end{tabular}
 \caption{Comparison of the performance of RTIPPA-EP(a), RIPPA-EP(b) with $\beta= - 0.0001, \theta=0.25,  \alpha=1/29-\varepsilon, \rho_k=1.4, \lambda_k=0.2 $ for all $k=1,2,3,\dots $.}
\end{table}
	
	\section{Conclusion}\label{conclude}
	\noindent
	In this paper, we showed that the Relaxation Proximal Point method with two-step inertial extrapolation can be adapted to solve non-convex equilibrium problems. We established the global convergence of the iterative sequence generated by the proposed method and provide a numerical illustration. Our numerical experiments highlight the advantages of using two-step inertial extrapolation over the one-step approach commonly employed in related studies on (convex) equilibrium problems. In future work, we plan to extend the proposed method by incorporating a combination of inertial and correction terms

		\end{document}